\newtheorem{thm}{Theorem}[section]
\theoremstyle{definition}
\newtheorem{cor}[thm]{Corollary}
\newtheorem{prop}[thm]{Proposition}
\newtheorem{defn}[thm]{Definition}
\newtheorem{rem}[thm]{Remark}
\newtheorem{ex}[thm]{Example}
\newtheorem{co}[thm]{Question}
\numberwithin{equation}{section}
\begin{document}
\title[Nil-$M$-Noetherian and nil-$M$-Artinian modules]
{Nil-$M$-Noetherian and nil-$M$-Artinian modules}

\author[Faranak Farshadifar]%
{Faranak Farshadifar}

\newcommand{\acr}{\newline\indent}
\address{Department of Mathematics Education, Farhangian University, P.O. Box 14665-889, Tehran, Iran.}
\email{f.farshadifar@cfu.ac.ir}

\subjclass[2010]{13C13, 13C99}%
\keywords {Nil-$M$-cyclic, nil-$M$-Noetherian, nil-$M$-Artinian, nil-$M$-finitely generated, nil-$M$-finitely cogenerated}

% ----------------------------------------------------------------
\begin{abstract}
Let $R$ be a commutative ring with identity and $M$ be an $R$-module.
The aim of this paper is to introduce and investigate the notions of nil-$M$-Noetherian and nil-$M$-Artinian modules as generalizations of
Noetherian and Artinian modules. Also, in this regard we introduce nil versions of some algebraic concepts.
\end{abstract}
\maketitle
% ----------------------------------------------------------------
\section{Introduction}
\noindent
Throughout this paper, $R$ will denote a commutative ring with
identity and $\Bbb Z$ will denote the ring of integers. Also, $Nil(R)$ will denote the set of all nilpotent elements of $R$.

In \cite{SU18}, the authors introduced and investigated the notion of $\mathfrak{N}$-prime ideals as a generalization of prime ideals.
A proper ideal $P$ of $R$ is said to be a $\mathfrak{N}$-prime ideal if $ab \in P$, for each
$a, b \in R$, then either $a \in P + Nil(R)$ or $b \in P + Nil(R)$ \cite{SU18}.
In \cite{Faranak2024} (resp. \cite{Faranak2025}), the present author introduced the notion of nil-prime (resp. nil-primary) ideals as a generalization of prime (resp. primary) ideals and studied some basic properties of this class of ideals.
A proper ideal $P$ of $R$ is said to be a \textit{nil-prime ideal} if there exists $x \in Nil(R)$ such that whenever $ab \in P$,  then $a \in P$ or $b \in P$ or $a+x \in P$ or $b+x \in P$ for each $a, b \in R$ (for generalizations of this notion for modules, see \cite{Faranak2027}). Also, a proper ideal $P$ of $R$ is said to be a \textit{nil-primary ideal} if there exists $x \in Nil(R)$ such that whenever $ab \in P$ for some $a, b \in R$,  then  $a \in P$ or $b^n \in P$ or $a+x \in P$ or  $b^n+x \in P$ for some $n \in \Bbb N$.

For a submodule $N$ of an $R$-module $M$, a non-empty subset
$K$ of $M$, and a non-empty subset $J$ of $R$, the residuals of $N$ by $K$ and $J$ are
defined as $(N :_R K) =\{a \in R : aK \subseteq N\}$ and $(N :_M J) = \{m \in M : Jm\subseteq N\}$,
respectively. In particular, we use $Ann_R(M)$ to denote $(0 :_R M)$.

Motivated by nil-prime ideals, the purpose of this paper is to introduce and investigate the notions of nil-$M$-Noetherian and nil-$M$-Artinian modules as generalizations of Noetherian and Artinian modules. Also, in this regard we introduce the notions of nil-$M$-cyclic, nil-$M$-finitely generated and nil-$M$-finitely cogenerated modules.
%%%%%%%%%%%%%%%%%%%%%%%%%%%%%%%
%%%%%%%%%%%%%%%%%%%%%%%%%%%%%%%%%%%%%%
%%%%%%%%%%%%%%%%%%%%%%%%%%%%%%%%%%%%%%%%%%%%%
%%%%%%%%%%%%%%%%%%%%%%%%%%%%%%%%%%%%%%%%%%%%%%%%
\section{Main Results}
\begin{defn}\label{1.1}
We say that an $R$-module $M$ is a \textit{nil-$M$-Noetherian module} if there is $r \in \sqrt{Ann_R(M)}$ such that
for each ascending chain
$$
N_1\subseteq N_2\subseteq N_3\subseteq \ldots\subseteq N_k\subseteq \ldots
$$
of submodules of $M$ there is a positive integer $t$ such that $N_{t+i}\subseteq N_t+rM$ for all $i \geq 0$. Moreover, we say that
the ring $R$ is a \textit{nil-$R$-Noetherian ring} if $R$ as an $R$-module is a nil-$R$-Noetherian module.
\end{defn}

\begin{defn}\label{1.3}
We say that an $R$-module $M$ is a \textit{nil-$M$-Artinian module} if there is $r \in \sqrt{Ann_R(M)}$ such that
for each descending chain
$$
N_1\supseteq N_2\supseteq N_3\supseteq ...\supseteq N_k\supseteq \ldots
$$
of submodules of $M$ there is a positive integer $t$ such that $N_t\cap (0:_Mr)\subseteq N_{t+i}$ for all $i \geq 0$.
Also, we say that the ring $R$ is a \textit{nil-$R$-Artinian ring} if $R$ as an $R$-module is a nil-$R$-Artinian module.
\end{defn}

\begin{rem}\label{1.2}
As $0 \in \sqrt{Ann_R(M)}$, we have that every Noetherian (resp. Artinian) $R$-module is a nil-$M$-Noetherian (resp. nil-$M$-Artinian) $R$-module. This motivates the Question \ref{1.4}.
If $\sqrt{Ann_R(M)}=0$, then the notions of Noetherian $R$-modules and nil-$M$-Noetherian $R$-modules (resp. Artinian $R$-modules and nil-$M$-Artinian $R$-modules) are equal. For example, for a positive integer $n$ which is square-free (i.e., $n$ has not a square factor),   consider the ring $\Bbb Z_n$ of integers modulo $n$. Then $\sqrt{Ann_{\Bbb Z_n}(\Bbb Z_n)}=0=Nil(\Bbb Z_n)$. Also, $\sqrt{Ann_{\Bbb Z}( \Bbb Z)}=0=Nil(\Bbb Z)$ and $\sqrt{Ann_{F[x])}(F[x]))}=0=Nil(F[x])$,  where $F$ is a field.
\end{rem}

\begin{co}\label{1.4}
Let $M$ be an $R$-module.
Is every nil-$M$-Noetherian (resp. nil-$M$-Artinian) module a Noetherian (resp. Artinian) module?
\end{co}

\begin{defn}\label{222.1}
We say that a submodule $N$ of an $R$-module $M$ is a \textit{nil-$M$-cyclic} if there exist $r \in \sqrt{Ann_R(M)}$ and $x \in M$ such that $N=Rx+rM$.
\end{defn}

Let $M$ be an $R$-module. Since $0 \in \sqrt{Ann_R(M)}$, clearly each cyclic submodule of $M$ is a nil-$M$-cyclic submodule of $M$. But  Example \ref{e2.1} shows that the converse is not true in general.
\begin{ex}\label{e2.1}
The submodule $N=\Bbb Zx+2\Bbb Z_4[x,y]$ of the $\Bbb Z$-module $M=\Bbb Z_4[x,y]$ is a nil-$M$-cyclic since $2\in \sqrt{Ann_\Bbb Z(M)}$ and $x \in \Bbb Z_4[x,y]$. But $N$ is not a cyclic submodule of $M$.
\end{ex}

\begin{defn}\label{22.1}
We say that a submodule $N$ of an $R$-module $M$ is a \textit{nil-$M$-finitely generated} (resp. \textit{weakly nil-$M$-finitely generated}) if there exist $r \in \sqrt{Ann_R(M)}$ and $x_1,x_2, \ldots, x_t \in M$ such that $N=Rx_1+Rx_2+\cdots+Rx_t+rM$ (resp. $N+rM=Rx_1+Rx_2+\cdots+Rx_t+rM$).
\end{defn}

\begin{rem}\label{282.1}
Let $M$ be an $R$-module. Clearly, every nil-$M$-finitely generated submodule of $M$ is a weakly nil-$M$-finitely generated submodule of $M$. If $N$ is a weakly nil-$M$-finitely generated submodule of $M$ such that $(N:_RM)$ is prime ideal of $R$, then
$\sqrt{Ann_R(M)}\subseteq \sqrt{(N:_RM)}=(N:_RM)$. This in turn implies that
 $N$ is a nil-$M$-finitely generated submodule of $M$.
\end{rem}

\begin{prop}\label{69.3}
Let $S$ be a multiplicatively subset of $R$ and let $M$ be an $R$-module.
If $N$ is a nil-$M$-finitely generated submodule of $M$, then  $S^{-1}N$ is a nil-$S^{-1}M$-finitely generated submodule of $S^{-1}M$.
\end{prop}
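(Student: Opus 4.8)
The plan is to push the given presentation of $N$ through the localization functor and check that the resulting data satisfies Definition \ref{22.1} over $S^{-1}R$. By hypothesis there exist $r \in \sqrt{Ann_R(M)}$ and $x_1, \ldots, x_t \in M$ with $N = Rx_1 + \cdots + Rx_t + rM$. I would propose $r/1 \in S^{-1}R$ as the nilpotent-type element and $x_1/1, \ldots, x_t/1 \in S^{-1}M$ as the generators, and then establish the two required facts below.

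First I would verify that $r/1 \in \sqrt{Ann_{S^{-1}R}(S^{-1}M)}$. Since $r \in \sqrt{Ann_R(M)}$, there is $n \in \Bbb N$ with $r^n M = 0$. Then for every $m/s \in S^{-1}M$ one has $(r/1)^n (m/s) = (r^n m)/s = 0$, so $(r/1)^n \in Ann_{S^{-1}R}(S^{-1}M)$ and hence $r/1$ lies in its radical.

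Second I would compute $S^{-1}N$ explicitly. Localization is exact and commutes with finite sums of submodules, so $S^{-1}N = \sum_{i=1}^{t} S^{-1}(Rx_i) + S^{-1}(rM)$. Using the identities $S^{-1}(Rx_i) = S^{-1}R \cdot (x_i/1)$ and $S^{-1}(rM) = (r/1)\, S^{-1}M$ — each confirmed by a direct element chase on fractions of the form $(ax_i)/s$ and $(rm)/s$ respectively — I would obtain
\[
S^{-1}N = S^{-1}R\,(x_1/1) + \cdots + S^{-1}R\,(x_t/1) + (r/1)\,S^{-1}M.
\]
Together with the first step this exhibits $S^{-1}N$ in exactly the form required by Definition \ref{22.1}, so $S^{-1}N$ is a nil-$S^{-1}M$-finitely generated submodule of $S^{-1}M$.

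The computation is essentially routine. The only point demanding a little care is confirming that the chosen element $r/1$ genuinely lies in $\sqrt{Ann_{S^{-1}R}(S^{-1}M)}$, which the first step settles directly via the equality $(r/1)^n(m/s) = (r^n m)/s$. No deeper obstacle arises, since localization respects both the finite module sum and the scalar multiplication used to present $N$.
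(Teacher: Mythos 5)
Your proof is correct and takes essentially the same route as the paper: localize the given presentation $N = Rx_1 + \cdots + Rx_t + rM$, use $r/1$ and $x_i/1$ as the new data, and check that $r/1 \in \sqrt{Ann_{S^{-1}R}(S^{-1}M)}$. The only difference is that you verify the radical membership and the sum decomposition by explicit element computations, whereas the paper asserts them via the inclusion $S^{-1}(\sqrt{Ann_R(M)}) \subseteq \sqrt{Ann_{S^{-1}R}(S^{-1}M)}$ without further detail.
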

\begin{proof}
Assume that $N$ is a nil-$M$-finitely generated submodule of $M$. Then there exist $r \in \sqrt{Ann_R(M)}$ and $x_1,x_2,\ldots , x_t \in M$ such that $N=Rx_1+Rx_2+\cdots+Rx_t+rM$. This implies that $r/1\in  S^{-1}(\sqrt{Ann_R(M)})\subseteq  \sqrt{Ann_{S^{-1}R}(S^{-1}M)}$ and $x_1/1,x_2/1,\ldots, x_t/1 \in S^{-1}M$ such that
$$
S^{-1}N=(S^{-1}R)(x_1/1)+(S^{-1}R)(x_2/1)+\cdots+(S^{-1}R)(x_t/1)+r/1(S^{-1}M).
$$
\end{proof}

A proper submodule $N$ of $M$ is said to be \emph{completely irreducible} if $N=\bigcap _
{i \in I}N_i$, where $ \{ N_i \}_{i \in I}$ is a family of
submodules of $M$, implies that $N=N_i$ for some $i \in I$. By \cite{FHo06},
every submodule of $M$ is an intersection of completely irreducible submodules of $M$. Thus the intersection
of all completely irreducible submodules of $M$ is zero \cite{FHo06}.

\begin{defn}\label{2422.1}
We say that a submodule $N$ of an $R$-module $M$ is a \textit{nil-$M$-completely irreducible submodule} if there exist $r \in \sqrt{Ann_R(M)}$ and completely irreducible submodule $L$ of $M$ such that $N=L\cap (0:_Mr)$.
\end{defn}

\begin{defn}\label{27882.1}
For a submodule $N$ of an $R$-module $M$, we say that $M/N$ is a \textit{nil-$M$-finitely cogenerated} (resp. \textit{weakly nil-$M$-finitely cogenerated}) if there exists $r \in \sqrt{Ann_R(M)}$ such that
$N=\cap_{i \in I}N_i$ implies that $N=\cap_ {i\in J}N_i\cap (0:_Mr)$ (resp. $\cap_ {i\in J}N_i\cap (0:_Mr)= N \cap (0:_Mr)$) for some finite subset $J$ of $I$.
\end{defn}

\begin{prop}\label{272.1}
If for a submodule $N$ of an $R$-module $M$ we have $M/N$ is a nil-$M$-finitely cogenerated (resp. weakly nil-$M$-finitely cogenerated) module, then there exist $r \in \sqrt{Ann_R(M)}$ and completely irreducible submodules $L_1,L_2, \ldots,L_t$ of $M$ such that $N=L_1 \cap L_2\cap \cdots \cap L_t\cap (0:_Mr)$ (resp. $L_1 \cap L_2\cap \cdots \cap L_t\cap (0:_Mr)= N\cap (0:_Mr)$).
\end{prop}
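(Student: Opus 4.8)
The plan is to reduce the statement to the very definition of (weakly) nil-$M$-finitely cogenerated by producing one convenient intersection representation of $N$---namely, one built entirely from completely irreducible submodules---and then feeding that representation into the hypothesis.

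First I would invoke the result of \cite{FHo06} recalled just before Definition \ref{2422.1}: every submodule of $M$ is an intersection of completely irreducible submodules of $M$. Applying this to $N$, I obtain a family $\{L_i\}_{i \in I}$ of completely irreducible submodules of $M$ with $N = \bigcap_{i \in I} L_i$. Next I would apply the hypothesis. Since $M/N$ is nil-$M$-finitely cogenerated, Definition \ref{27882.1} furnishes an element $r \in \sqrt{Ann_R(M)}$---chosen once and valid for every way of writing $N$ as an intersection---such that this particular representation $N = \bigcap_{i \in I} L_i$ forces the existence of a finite subset $J \subseteq I$ with $N = \bigcap_{i \in J} L_i \cap (0:_M r)$. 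Writing $J = \{i_1, \ldots, i_t\}$ and relabelling $L_k := L_{i_k}$ for $1 \le k \le t$ then yields completely irreducible submodules $L_1, \ldots, L_t$ with $N = L_1 \cap L_2 \cap \cdots \cap L_t \cap (0:_M r)$, which is exactly the assertion. The weakly nil-$M$-finitely cogenerated case is handled identically, except that the definition now only guarantees $\bigcap_{i \in J} L_i \cap (0:_M r) = N \cap (0:_M r)$, giving the parenthetical conclusion.

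Since the argument is a direct unwinding of the definition, I do not anticipate a genuine obstacle; the one point that requires care is the order of quantifiers in Definition \ref{27882.1}. The element $r$ is fixed in advance and must work for \emph{all} representations of $N$ as an intersection, so it is legitimate to specialize to the completely irreducible family coming from \cite{FHo06} while keeping the same $r$. No compatibility issue arises, because the completely irreducible decomposition of $N$ exists unconditionally for every submodule, and hence in particular for the $N$ at hand.
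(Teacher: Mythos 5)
Your proposal is correct and is exactly the paper's argument: the paper's own proof is a one-line appeal to the fact from \cite{FHo06} that every submodule of $M$ is an intersection of completely irreducible submodules, which you simply unwind in more detail by feeding that decomposition of $N$ into Definition \ref{27882.1}. Your remark about the quantifier order (that $r$ is fixed in advance and works for every representation of $N$) is a correct and worthwhile clarification, but it does not change the route.
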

\begin{proof}
This follows from the fact that every submodule of $M$ is an intersection of completely irreducible submodules of $M$.
\end{proof}

\begin{rem}\label{2892.1}
Let $M$ be an $R$-module. Clearly, every nil-$M$-finitely cogenerated homomorphic image of $M$ is a weakly nil-$M$-finitely cogenerated homomorphic image of $M$. It is easy to see that, if $M/N$ is a weakly nil-$M$-finitely cogenerated homomorphic image of $M$ such that $Ann_R(N)$ is prime ideal of $R$, then $M/N$ is a nil-$M$-finitely cogenerated homomorphic image of $M$.
\end{rem}

\begin{prop}\label{1.71}
Let $M$ be an $R$-module. Then we have the following.
\begin{itemize}
\item [(a)] If for a submodule $N$ of $M$ and $r \in \sqrt{Ann_R(M)}$ we have $N+rM=M$, then $N=M$.
\item [(b)] If for a submodule $N$ of $M$ and $r \in \sqrt{Ann_R(M)}$ we have $N\cap (0:_Mr)=0$, then $N=0$.
\end{itemize}
\end{prop}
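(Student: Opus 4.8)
The plan is to read the hypothesis $r \in \sqrt{Ann_R(M)}$ in its concrete form: it says exactly that $r^n \in Ann_R(M)$ for some positive integer $n$, i.e. $r^n M = 0$. Both parts then follow by bootstrapping the given relation up the powers of $r$ until this vanishing collapses the statement; part (a) is a Nakayama-type argument and part (b) is its dual.

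For part (a), I would start from $N + rM = M$ and substitute it into itself. Since $N$ is a submodule we have $rN \subseteq N$, so multiplying the relation by $r$ gives $rM = rN + r^2M \subseteq N + r^2M$, and hence $M = N + rM \subseteq N + r^2M \subseteq M$, which forces $M = N + r^2M$. Iterating this substitution yields $M = N + r^kM$ for every $k \geq 1$, and I would make this precise by a short induction on $k$. Taking $k = n$ and using $r^nM = 0$ collapses the right-hand side to $N$, so $N = M$.

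For part (b), I would run the dual induction along the ascending chain $(0 :_M r) \subseteq (0 :_M r^2) \subseteq \cdots$, proving the claim that $N \cap (0 :_M r^k) = 0$ for all $k$. The base case $k = 1$ is the hypothesis. For the inductive step, given $x \in N \cap (0 :_M r^{k+1})$, I would observe that $rx$ lies in $N$ (as $N$ is a submodule) and satisfies $r^k(rx) = 0$, so $rx \in N \cap (0 :_M r^k)$, which vanishes by the inductive hypothesis; thus $rx = 0$, placing $x$ in $N \cap (0 :_M r) = 0$. Taking $k = n$, the submodule $(0 :_M r^n)$ equals all of $M$ because $r^nM = 0$, so $N = N \cap M = 0$.

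Both arguments are genuinely short inductions, and I do not anticipate a real obstacle. The only point that needs care is the very first step, namely recording that membership in the radical $\sqrt{Ann_R(M)}$ supplies an explicit exponent $n$ with $r^nM = 0$; everything else is a routine iteration that the proposition then packages as a clean Nakayama/co-Nakayama pair.
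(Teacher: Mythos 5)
Your proposal is correct and follows essentially the same route as the paper: part (a) is the same Nakayama-style iteration obtained by multiplying $N+rM=M$ by $r$ and substituting back, and part (b) is the same induction the paper phrases via the colon identities $(0:_N r^{k+1})=((0:_N r^k):_N r)$, terminating at the exponent $t$ with $r^tM=0$. The only difference is presentational: you make the induction explicit, while the paper writes ``continuing this way.''
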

\begin{proof}
(a) Suppose that for a submodule $N$ of $M$ and $r \in \sqrt{Ann_R(M)}$ we have $N+rM=M$. Then there exists $t \in \Bbb N$ such that $r^tM=0$. We have $rN+r^2M=rM$. This implies that  $rN+r^2M+N=rM+N$ and so $N+r^2M=M$. Continuing this way, we get that
$M=N+r^tM=N+0=N$.

(b) Suppose that for a submodule $N$ of $M$ and $r \in \sqrt{Ann_R(M)}$ we have $N\cap (0:_Mr)=0$. Then there exists $t \in \Bbb N$ such that $r^tM=0$. Clearly, $N\cap (0:_Mr)=(0:_Nr)$. Hence, $(0:_Nr)=0$. Therefore, 
$$
(0:_Nr^2)=((0:_Nr):_Nr)=(0:_Nr)=N \cap (0:_Mr)=0.
$$
Continuing this way, we have
$0=(0:_Nr^t)=N\cap (0:_Mr^t)=N\cap M=
N$.
\end{proof}

\begin{cor}\label{2t22.1}
Let $M$ be an $R$-module. Then we have the following.
\begin{itemize}
\item [(a)] $M$ is a nil-$M$-cyclic module if and only if it is a cyclic module.
\item [(b)] $M$ is a weakly nil-$M$-finitely generated module if and only if it is a finitely generated module.
\item [(c)] The submodule $0$ of $M$ is a  nil-$M$-completely irreducible submodule
if and only if it is a completely irreducible submodule.
\item [(d)] $M$ is a weakly nil-$M$-finitely cogenerated module
if and only if it is a finitely cogenerated module.
\end{itemize}
\end{cor}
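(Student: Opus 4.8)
The plan is to treat all four equivalences as essentially immediate consequences of Proposition \ref{1.71}, which functions here as a cancellation device: part (a) of that proposition removes an $rM$ summand and part (b) removes a $(0:_Mr)$ intersection factor. In every case the reverse implication is trivial by taking $r=0$ (recall $0\in\sqrt{Ann_R(M)}$), so all the work sits in the forward direction, and the whole argument is really a matter of reading off the right specialization of each definition to $M$ itself (respectively to the submodule $0$).

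For (a), I would suppose $M$ is nil-$M$-cyclic, so by Definition \ref{222.1} there are $r\in\sqrt{Ann_R(M)}$ and $x\in M$ with $M=Rx+rM$. Setting $N=Rx$ gives $N+rM=M$, and Proposition \ref{1.71}(a) yields $N=M$, i.e.\ $M=Rx$ is cyclic. Part (b) runs identically with $N=Rx_1+\cdots+Rx_t$: since $M+rM=M$, the weak nil finite generation condition unwinds to $M=Rx_1+\cdots+Rx_t+rM$, and Proposition \ref{1.71}(a) again forces $N=M$, so $M$ is finitely generated.

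The cogeneration-type statements (c) and (d) are dual to the above and use Proposition \ref{1.71}(b). For (c), if the zero submodule is nil-$M$-completely irreducible, then by Definition \ref{2422.1} there are $r\in\sqrt{Ann_R(M)}$ and a completely irreducible $L$ with $0=L\cap(0:_Mr)$; applying Proposition \ref{1.71}(b) to $N=L$ forces $L=0$, whence $0=L$ is itself completely irreducible. For (d), interpreting ``$M$ is weakly nil-$M$-finitely cogenerated'' as the case $N=0$ of Definition \ref{27882.1}, I would take any representation $0=\bigcap_{i\in I}N_i$, extract the guaranteed finite subset $J$ with $\bigcap_{i\in J}N_i\cap(0:_Mr)=0$, and apply Proposition \ref{1.71}(b) to $N=\bigcap_{i\in J}N_i$ to conclude $\bigcap_{i\in J}N_i=0$, which is exactly finite cogeneration of $M$.

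I expect no genuine obstacle: once Proposition \ref{1.71} is available the content is bookkeeping. The only points demanding care are correctly specializing each definition to the module $M$ (or the submodule $0$) before applying the proposition, and, in (c) and (d), recognizing that Proposition \ref{1.71}(b) collapses the perturbed intersection to an honest one in a single step rather than inviting a more elaborate argument.
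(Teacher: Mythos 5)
Your proof is correct and follows essentially the same route as the paper: each part is the indicated specialization of the definitions to $M$ (or to the submodule $0$) followed by the cancellation given in Proposition \ref{1.71}, with the converses trivial since $0\in\sqrt{Ann_R(M)}$. One small point in your favor: in part (d) you correctly invoke Proposition \ref{1.71}(b), whereas the paper's text cites part (a) there, which is evidently a typo.
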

\begin{proof}
(a)  Let $M$ be a nil-$M$-cyclic $R$-module. Then there exist $r \in \sqrt{Ann_R(M)}$ and $x \in M$ such that $M=Rx+rM$.
This implies that $M=Rx$ by Proposition \ref{1.71} (a). Thus $M$ is a cyclic module. Since $0 \in \sqrt{Ann_R(M)}$, the converse is clear.

(b) Let $M$ be a weakly nil-$M$-finitely generated $R$-module. Then there exist $r \in \sqrt{Ann_R(M)}$ and $x_1,x_2,\ldots, x_t \in M$ such that $M=M+rM=Rx_1+Rx_2+\cdots+Rx_t+rM$. Thus $M=Rx_1+Rx_2+\cdots+Rx_t$ by Proposition \ref{1.71} (a). So, $M$ is a finitely generated module. Since $0 \in \sqrt{Ann_R(M)}$, the converse is clear.

(c) Assume that the submodule $0$ of $M$ is a nil-$M$-completely irreducible submodule. Then there exist $r \in \sqrt{Ann_R(M)}$ and completely irreducible submodule $L$ of $M$ such that $0=L\cap (0:_Mr)$.
This implies that $L=0$ by Proposition \ref{1.71} (b). So, $0$ is a completely irreducible submodule of $M$. Since $0 \in \sqrt{Ann_R(M)}$, the converse is clear.

(d) Suppose that $M$ is a weakly nil-$M$-finitely cogenerated $R$-module and $0=\cap_{i \in I} N_i$. Then there exist $r \in \sqrt{Ann_R(M)}$ and finite subset $J$ of $I$ such that $\cap_{i\in J}N_i\cap (0:_Mr)=0\cap (0:_Mr)=0$. It follows that  $\cap_{i\in J}N_i=0$ by Proposition \ref{1.71} (a). Therefore, $M$ is a finitely cogenerated module. Since $0 \in \sqrt{Ann_R(M)}$, the converse is clear.
\end{proof}

\begin{thm}\label{6.1}
Let $M$ be an $R$-module. Consider the following statements:
\begin{itemize}
\item [(a)] Every submodule of $M$ is nil-$M$-finitely generated;
\item [(b)] $M$ is a nil-$M$-Noetherian module;
\item [(c)] There exists $r \in \sqrt{Ann_R(M)}$ such that any non-empty collection $\Sigma$ of submodules of $M$ with the form $K+rM$ has a maximal element $N+rM$;
\item [(d)] Every submodule of $M$ is weakly nil-$M$-finitely generated.
\end{itemize}
Then $(a)\Rightarrow (b)$, $(b)\Rightarrow (c)$, and $(c)\Rightarrow (d)$.
\end{thm}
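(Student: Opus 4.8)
The plan is to transfer the three classical equivalences — ``every submodule is finitely generated'', the ascending chain condition, and ``every nonempty family has a maximal element'' — to the nil setting, exploiting repeatedly the trivial absorption identity $(K+rM)+rM=K+rM$. For $(a)\Rightarrow(b)$ I would imitate the standard proof that finite generation of all submodules forces the ACC. Given an ascending chain $N_1\subseteq N_2\subseteq\cdots$, I would form the union $N=\bigcup_i N_i$, which is again a submodule of $M$. By hypothesis (a), $N$ is nil-$M$-finitely generated, so there are $r\in\sqrt{Ann_R(M)}$ and $x_1,\ldots,x_k\in M$ with $N=Rx_1+\cdots+Rx_k+rM$. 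Each generator lies in $Rx_j\subseteq N=\bigcup_i N_i$, so $x_j\in N_{t_j}$ for some index $t_j$; taking $t=\max_j t_j$ places all generators in $N_t$, whence $Rx_1+\cdots+Rx_k\subseteq N_t$ and therefore $N_{t+i}\subseteq N=Rx_1+\cdots+Rx_k+rM\subseteq N_t+rM$ for every $i\ge 0$, which is precisely the stabilization demanded in Definition \ref{1.1}.

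For $(b)\Rightarrow(c)$ I would fix the element $r$ provided by (b) and argue by contradiction. Let $\Sigma$ be a nonempty family of submodules each of the form $K+rM$. If $\Sigma$ had no maximal element, then by dependent choice I could build a strictly ascending chain $K_1+rM\subsetneq K_2+rM\subsetneq\cdots$ lying in $\Sigma$. Applying (b) to this chain of submodules of $M$ produces an index $t$ with $K_{t+1}+rM\subseteq(K_t+rM)+rM$; but $(K_t+rM)+rM=K_t+rM$, so $K_{t+1}+rM\subseteq K_t+rM$, contradicting strict inclusion. Hence $\Sigma$ has a maximal element $N+rM$, and the very same $r$ witnesses (c).

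For $(c)\Rightarrow(d)$ I would once more fix the $r$ from (c) and take an arbitrary submodule $N$ of $M$. The idea is to apply (c) to the family $\Sigma=\{\,Rx_1+\cdots+Rx_k+rM : k\ge 0,\ x_1,\ldots,x_k\in N\,\}$, whose members all have the required shape $K+rM$ and which is nonempty. A maximal element $N_0+rM$, with $N_0=Rx_1+\cdots+Rx_k$ and $x_i\in N$, then satisfies $N+rM=N_0+rM$: the inclusion $\supseteq$ is immediate, and for $\subseteq$ I would take any $y\in N$, observe that $N_0+Ry+rM\in\Sigma$ contains $N_0+rM$, and invoke maximality to get $N_0+Ry+rM=N_0+rM$, so $y\in N_0+rM$. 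Thus $N+rM=Rx_1+\cdots+Rx_k+rM$, i.e.\ $N$ is weakly nil-$M$-finitely generated in the sense of Definition \ref{22.1}.

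The main obstacle I anticipate lies in the quantifier on $r$ in $(a)\Rightarrow(b)$: hypothesis (a) only supplies an $r$ that a priori depends on the submodule, hence, through the union, on the chain, whereas Definition \ref{1.1} asks for one $r$ good for \emph{every} chain, and it is exactly this uniform $r$ that is then reused in $(b)\Rightarrow(c)\Rightarrow(d)$. The union argument above yields precisely a chain-dependent $r$, so the delicate point is to justify that a single $r\in\sqrt{Ann_R(M)}$ can be selected once and for all; apart from that, the elementary absorption identity $(K+rM)+rM=K+rM$ is the only mechanism driving all three implications.
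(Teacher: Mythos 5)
Your proof follows the paper's argument essentially verbatim in all three implications: the union of the chain plus nil-$M$-finite generation for $(a)\Rightarrow(b)$, the contradiction via a strictly ascending chain and the absorption identity $(K_t+rM)+rM=K_t+rM$ for $(b)\Rightarrow(c)$, and maximality in the family $\{Rx_1+\cdots+Rx_k+rM : x_i\in N\}$ for $(c)\Rightarrow(d)$. As for the obstacle you flag at the end: the quantifier mismatch is real, but the paper's own proof of $(a)\Rightarrow(b)$ does not resolve it either --- it extracts $r$ from the nil-$M$-finite generation of $N=\bigcup_{i}N_i$ exactly as you do, so its $r$ also depends on the chain, whereas Definition~\ref{1.1} requires a single $r$ chosen before all chains; your proof is therefore at precisely the same level of rigor as the published one, and closing that gap would need either an extra hypothesis giving a uniform $r$ or a reading of Definition~\ref{1.1} in which $r$ may vary with the chain.
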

\begin{proof}
$(a)\Rightarrow (b)$
 Assume that the following is an increasing sequence of submodules of $M$
$$
N_1\subseteq N_2\subseteq N_3\subseteq \ldots\subseteq N_k\subseteq \ldots.
$$
Consider $N := \bigcup_{i\geq1} N_i$. Then $N$ is a submodule of $M$, hence it is nil-$M$-finitely generated by part (a). Thus there exist
 $r \in \sqrt{Ann_R(M)}$ and $u_1, \ldots , u_t \in M$ such that $N=Ru_1+\cdots +Ru_t+rM$. Then we can find $s$ such that $u_j \in N_s$ for all $j=1,2,\ldots, t$. In this case we have $N+rM \subseteq N_s+rM$. Therefore,  for all $i \geq 0$ we have
$
N_{s+i}\subseteq N\subseteq N+rM \subseteq N_s+rM
$,
as needed.

$(b)\Rightarrow (c)$
First note that by part (b), there is  $r \in \sqrt{Ann_R(M)}$ such that
for each ascending chain
$$
N_1\subseteq N_2\subseteq N_3\subseteq \ldots\subseteq N_k\subseteq \ldots
$$
of submodules of $M$ there is a positive integer $t$ such that $N_{t+i}\subseteq N_t+rM$ for all $i \geq 0$.
 Suppose that there exists a non-empty collection $\Sigma$ of submodules of $M$ with the form $N+rM$ such that it has no maximal element. Let us choose $N_1 +rM\in \Sigma$. Since this is not maximal, there is $N_2 +rM\in \Sigma$ such that $N_1+rM \subset N_2+rM$, and we continue in this way to construct the following infinite strictly increasing sequence of submodules of $M$:
 $$
 N_1+rM \subset N_2+rM\subset N_3+rM \subset \ldots.
  $$
 By part (b), there is a positive integer $t$ such that
 $$
 N_{t+1}+rM\subseteq N_t+rM+rM=N_t+rM,
 $$
which is a contradiction.

$(c)\Rightarrow (d)$
 Suppose that there exists $r \in \sqrt{Ann_R(M)}$ such that any non-empty collection $\Sigma$ of submodules of $M$ with the form $K+rM$ has a maximal element.
Let $N$ be a submodule of $M$ and consider the family $\Sigma$ of all submodules of $M$ with the form $K+rM$, where $K=Rx_1+\cdots +Rx_t$ for some $x_i\in N$. Clearly, $\Sigma$ is non-empty. By part (c), $\Sigma$ has a maximal element $\acute{N}+rM$,  where  $\acute{N}=Rx_1+\cdots +Rx_n$ for some $x_i \in N$. If $\acute{N}+rM\not=N+rM$, then there is $u \in  N+rM\setminus \acute{N}+rM$ and the submodule $Ru+\acute{N}+rM$ strictly containing $\acute{N}+rM$, a contradiction. Thus $\acute{N}+rM= N+rM$, as needed.
 \end{proof}

\begin{defn}\label{1.1999}
\begin{itemize}
\item [(a)] We say that a submodule $N$ of an $R$-module $M$ is a \textit{nil-$M$-Noetherian submodule} if there is $r \in \sqrt{Ann_R(M)}$ such that
for each ascending chain
$$
N_1\subseteq N_2\subseteq N_3\subseteq \ldots\subseteq N_k\subseteq \ldots
$$
of submodules of $N$ there is a positive integer $t$ such that $N_{t+i}\subseteq N_t+rM$ for all $i \geq 0$.

\item [(b)] We say that a quotient module $M/N$ of an $R$-module $M$ is a \textit{nil-$M$-Noetherian quotient module} if there is $r \in \sqrt{Ann_R(M)}$ such that
for each ascending chain
$$
N_1/N\subseteq N_2/N\subseteq N_3/N\subseteq \ldots\subseteq N_k/N\subseteq \ldots
$$
of submodules of $M/N$ there is a positive integer $t$ such that $N_{t+i}/N\subseteq (N_t+rM)/N$ for all $i \geq 0$.
\end{itemize}
\end{defn}

Recall that a submodule $N$ of an $R$-module $M$ is said to be \textit{pure} in $M$ if $rM\cap N = rN$ for each $r\in R$ \cite{Ri72}.
$M$ is said to be \emph{fully pure} if every submodule of $M$ is pure \cite{AF122}.
\begin{prop}\label{6.2}
Let $M$ be a nil-$M$-Noetherian $R$-module. Then we have the following.
\begin{itemize}
\item [(a)] Any pure submodule $N$ of $M$ is a nil-$N$-Noetherian $R$-module.
\item [(b)] Any quotient module $M/N$ of $M$ is  a nil-$M/N$-Noetherian $R$-module.
\item [(c)] Any submodule $N$ of $M$ is a nil-$M$-Noetherian $R$-module.
\item [(d)] Any quotient module $M/N$ of $M$ is  a nil-$M$-Noetherian $R$-module.
\end{itemize}
\end{prop}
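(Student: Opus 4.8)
The plan is to extract the single witness $r \in \sqrt{Ann_R(M)}$ from the hypothesis that $M$ is nil-$M$-Noetherian and to verify that this same $r$ serves as a witness in each of the four assertions, after checking that it lies in the appropriate radical each time. The first observation I would record is that for any submodule $N \subseteq M$ one has $Ann_R(M) \subseteq Ann_R(N)$ and $Ann_R(M) \subseteq Ann_R(M/N)$, whence $r \in \sqrt{Ann_R(M)} \subseteq \sqrt{Ann_R(N)}$ and $r \in \sqrt{Ann_R(M/N)}$. This guarantees that $r$ is a legitimate witness element for the module $N$, for the module $M/N$, and for the submodule and quotient versions in Definition \ref{1.1999}.

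For parts (c) and (d) I would argue directly. In (c), any ascending chain of submodules of $N$ is in particular an ascending chain of submodules of $M$, so the hypothesis yields a $t$ with $N_{t+i} \subseteq N_t + rM$ for all $i \geq 0$; this is precisely the condition in Definition \ref{1.1999}(a). For (d), I would lift an ascending chain $N_1/N \subseteq N_2/N \subseteq \cdots$ of submodules of $M/N$ to the ascending chain $N_1 \subseteq N_2 \subseteq \cdots$ in $M$ (with $N \subseteq N_i$), apply the hypothesis to obtain $t$ with $N_{t+i} \subseteq N_t + rM$, and then pass to the quotient to get $N_{t+i}/N \subseteq (N_t + rM)/N$, which is the condition in Definition \ref{1.1999}(b). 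Part (b) is the genuine-module version of (d): after obtaining $N_{t+i} \subseteq N_t + rM$, I would rewrite $(N_t + rM)/N = N_t/N + r(M/N)$ using $N \subseteq N_t$, so that the lifted-and-lowered chain satisfies the nil-$(M/N)$-Noetherian condition of Definition \ref{1.1} applied to the module $M/N$.

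Part (a) is where the purity hypothesis enters and is the main obstacle. Given an ascending chain $K_1 \subseteq K_2 \subseteq \cdots$ of submodules of the pure submodule $N$, I would view it as a chain in $M$ and apply the hypothesis to get $t$ with $K_{t+i} \subseteq K_t + rM$ for all $i \geq 0$. The crux is to convert the ambient correction term $rM$ into the intrinsic term $rN$. Since $K_{t+i} \subseteq N$ and $K_t \subseteq N$, intersecting with $N$ and applying the modular law gives $K_{t+i} \subseteq N \cap (K_t + rM) = K_t + (N \cap rM)$, and purity of $N$ in $M$, namely $rM \cap N = rN$, turns this into $K_{t+i} \subseteq K_t + rN$. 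Combined with $r \in \sqrt{Ann_R(N)}$ from the first paragraph, this shows $N$ is nil-$N$-Noetherian. The only delicate point to check carefully is the applicability of the modular law, which requires exactly the containment $K_t \subseteq N$ that holds because the entire chain lives inside $N$; the remaining steps are a direct transcription of the definitions.
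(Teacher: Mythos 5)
Your proof is correct and follows essentially the same route as the paper: the same witness $r\in\sqrt{Ann_R(M)}\subseteq\sqrt{Ann_R(N)}$, $\sqrt{Ann_R(M/N)}$ is reused throughout, parts (c) and (d) are handled by viewing/lifting chains in $M$, and part (a) uses intersection with $N$ via the modular law together with purity ($rM\cap N=rN$) exactly as in the paper. Your explicit remark that the modular law needs $K_t\subseteq N$ is a nice clarification of a step the paper leaves implicit.
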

\begin{proof}
(a) Let $N$ be a pure submodule of $M$ and
$$
K_1\subseteq K_2\subseteq K_3\subseteq \ldots\subseteq K_t\subseteq \ldots
$$
be an ascending chain of submodules of $N$. Then since this chain is also an ascending chain of submodules of $M$,
there is a positive integer $s$ such that $K_{s+i}\subseteq K_s+rM$ for all $i \geq 0$, where $r \in \sqrt{Ann_R(M)}\subseteq \sqrt{Ann_R(N)}$. As $N$ is pure, $N\cap rM=rN$. Therefore, for $r \in \sqrt{Ann_R(N)}$ we get that
$$
K_{s+i}=K_{s+i}\cap N\subseteq (K_s+rM)\cap N=(K_s\cap N)+(rM\cap N)=K_s+rN,
$$
for all $i \geq 0$. Thus $N$ is a nil-$N$-Noetherian $R$-module.

(b) Let $N$ be a submodule of $M$ and
$$
H_1/N\subseteq H_2/N\subseteq H_3/N\subseteq \ldots\subseteq H_t/N\subseteq \ldots
$$
be an ascending chain of submodules of $M/N$. Then $$
H_1\subseteq H_2\subseteq H_3\subseteq \ldots\subseteq H_t\subseteq \ldots
$$
is an ascending chain of submodules of $M$. Thus there is a positive integer $t$ such that $H_{t+i}\subseteq H_t+rM$ for all $i \geq 0$, where $r \in \sqrt{Ann_R(M)}\subseteq \sqrt{Ann_R(M/N)}$.  This implies that for all $i \geq 0$,
$H_{t+i}/N\subseteq H_t/N+r(M/N)$, where $r \in  \sqrt{Ann_R(M/N)}$. Hence, $M/N$ is a nil-$M/N$-Noetherian $R$-module.

(c) This follows from the fact that any increasing sequence of submodules of $N$ is also an increasing sequence of submodules of $M$.

(d) This follows from the fact that any increasing sequence of submodules of $M/N$ corresponds to a sequence of submodules of $M$ containing $N$, So $M/N$ is a nil-$M$-Noetherian module.
\end{proof}

\begin{thm}\label{69.1}
Let $M$ be an $R$-module. Consider the following statements:
\begin{itemize}
\item [(a)] Every homomorphic image of $M$ is nil-$M$-finitely cogenerated $R$-module;
\item [(b)] $M$ is a nil-$M$-Artinian module;
\item [(c)] There exists $r \in \sqrt{Ann_R(M)}$ such that any non-empty collection $\Sigma$ of submodules of $M$ with the form $K\cap (0:_Mr)$ has a minimal element $N\cap (0:_Mr)$.
\item [(d)] Every homomorphic image of $M$ is weakly nil-$M$-finitely cogenerated.
\end{itemize}
Then $(a)\Rightarrow (b)$, $(b)\Rightarrow (c)$, and $(c)\Rightarrow (d)$.
\end{thm}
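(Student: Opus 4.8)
The plan is to treat Theorem \ref{69.1} as the Artinian dual of Theorem \ref{6.1} and to dualize each step mechanically: unions of chains become intersections, the operation $K+rM$ becomes $K\cap(0:_Mr)$, and maximal elements become minimal elements. The formal identity that will do the work throughout is $(K\cap(0:_Mr))\cap(0:_Mr)=K\cap(0:_Mr)$, which plays exactly the role that $(K+rM)+rM=K+rM$ plays in the Noetherian argument.

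For $(a)\Rightarrow(b)$, I would begin with a descending chain $N_1\supseteq N_2\supseteq\cdots$ of submodules of $M$ and form $N:=\bigcap_{i\geq1}N_i$. Since $M/N$ is a homomorphic image of $M$, part (a) supplies $r\in\sqrt{Ann_R(M)}$ for which the decomposition $N=\bigcap_{i\geq1}N_i$ reduces to a finite one, namely $N=\bigcap_{i\in J}N_i\cap(0:_Mr)$ for some finite $J$. Because the chain is descending, $\bigcap_{i\in J}N_i=N_s$ where $s=\max J$, so $N=N_s\cap(0:_Mr)$. Taking $t=s$, for every $i\geq0$ we have $N_t\cap(0:_Mr)=N\subseteq N_{t+i}$, which is precisely the nil-$M$-Artinian condition of Definition \ref{1.3}.

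For $(b)\Rightarrow(c)$, I would fix the $r$ furnished by part (b) and argue by contradiction. If a non-empty collection $\Sigma$ of submodules of the form $K\cap(0:_Mr)$ had no minimal element, I could construct a strictly descending chain $M_1\supsetneq M_2\supsetneq\cdots$ with each $M_k=N_k\cap(0:_Mr)\in\Sigma$. Applying (b) to this chain of submodules of $M$ yields $t$ with $M_t\cap(0:_Mr)\subseteq M_{t+1}$; but $M_t\subseteq(0:_Mr)$ forces $M_t\cap(0:_Mr)=M_t$, so $M_t\subseteq M_{t+1}$, contradicting the strict descent.

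For $(c)\Rightarrow(d)$, given a submodule $N$ together with a decomposition $N=\bigcap_{i\in I}N_i$, I would apply (c) to the collection $\Sigma$ of all submodules of the form $\bigl(\bigcap_{i\in F}N_i\bigr)\cap(0:_Mr)$ as $F$ ranges over finite subsets of $I$, obtaining a minimal element at some finite $J$. The inclusion $N\cap(0:_Mr)\subseteq\bigl(\bigcap_{i\in J}N_i\bigr)\cap(0:_Mr)$ is automatic; for the reverse, any element of the right-hand side lying outside $N$ would lie outside some $N_{i_0}$, and adjoining $i_0$ to $J$ would yield a strictly smaller member of $\Sigma$, contradicting minimality. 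I expect this final implication to be the main obstacle, since it is the one that is not a verbatim dualization: it requires organizing the family of finite sub-intersections correctly and checking that the minimal element must actually equal $N\cap(0:_Mr)$ rather than merely contain it.
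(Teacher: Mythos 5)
Your proposal is correct and takes essentially the same route as the paper's own proof: the same finite-intersection reduction with $t=\max J$ for $(a)\Rightarrow(b)$, the same no-minimal-element contradiction exploiting $(K\cap(0:_Mr))\cap(0:_Mr)=K\cap(0:_Mr)$ for $(b)\Rightarrow(c)$, and the same family of finite sub-intersections for $(c)\Rightarrow(d)$. The only difference is that you spell out the adjoin-one-more-index minimality argument in $(c)\Rightarrow(d)$, which the paper compresses into the phrase ``by the minimality.''
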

\begin{proof}
$(a)\Rightarrow (b)$
 Suppose that the following is a descending chain of submodules of $M$
$$
N_1\supseteq N_2\supseteq N_3\supseteq \ldots\supseteq N_k\supseteq \ldots.
$$
Consider $N := \bigcap_{i\geq1} N_i$. Then $N$ is a submodule of $M$, hence $M/N$ is nil-$M$-finitely generated by part (a). Thus there exist $r \in \sqrt{Ann_R(M)}$ and $N_{j_1}, \ldots, N_{j_t}$ of the above chain such that $N=N_{j_1}\cap \ldots \cap N_{j_t}\cap (0:_Mr)$.
Then we can find $s$ such that $N_{j_s} \subseteq N_{j_i}$ for all $i=1,\ldots , t$.
Therefore, $N=N_{j_s}\cap (0:_Mr)$. Now, for all $i \geq 0$ we have
$
N_{j_s}\cap (0:_Mr) = N \subseteq N_{j_{s+i}}
$,
as needed.

$(b)\Rightarrow (c)$
First note that by part (b), there is  $r \in \sqrt{Ann_R(M)}$ such that
for each descending chain
$$
N_1\supseteq N_2\supseteq N_3\supseteq ...\supseteq N_k\supseteq \ldots
$$
of submodules of $M$ there is a positive integer $t$ such that $N_t\cap (0:_Mr)\subseteq N_{t+i}$ for all $i \geq 0$.
Suppose that there exists a non-empty collection $\Sigma$ of submodules of $M$ with the form $N\cap (0:_Mr)$ such that it has no minimal element. Let us choose $N_1 \cap (0:_Mr)\in \Sigma$. Since this is not minimal, there is $N_2 \cap (0:_Mr)\in \Sigma$ such that $N_2\cap (0:_Mr) \subset N_1\cap (0:_Mr)$, and we continue in this way to construct the following infinite strictly descending chains of submodules of $M$:
 $$
 N_1\cap (0:_Mr) \supset N_2\cap (0:_Mr)\supset N_3\cap (0:_Mr) \supset \ldots.
  $$
 By part (b), there is a positive integer $t$ such that
 $$
 N_t\cap (0:_Mr)=N_t\cap (0:_Mr)\cap (0:_Mr)\subseteq N_{t+1}\cap (0:_Mr),
 $$
which is a contradiction.

$(c)\Rightarrow (d)$
 Suppose that there exists $r \in \sqrt{Ann_R(M)}$ such that any non-empty collection $\Sigma$ of submodules of $M$ with the form $K\cap (0:_Mr)$ has a minimal element.
Let $N$ be a submodule of $M$ and $N=\cap_{i\in I} N_i$.
Consider the family $\Sigma$ of all submodules of $M$ with the form $\cap_{i\in J} N_i\cap (0:_Mr)$, where
$J$ is a finite subset of $I$.
Clearly, $\Sigma$ is non-empty. By part (c), $\Sigma$ has a minimal element $\cap_{i\in H} N_i\cap (0:_Mr)$,  where  $H$ is a finite subset of $I$.  By the minimality, we get that $N\cap (0:_Mr)=\cap_{i\in I} N_i\cap (0:_Mr)=\cap_{i\in H} N_i\cap (0:_Mr)$, as needed.
 \end{proof}

 \begin{cor}\label{677.92}
Let $M$ be an $R$-module. Then we have the following.
\begin{itemize}
\item [(a)] If $M$ is a nil-$M$-Noetherian $R$-module, then $M$ is a finitely generated $R$-module.
\item [(b)] If $M$ is a nil-$M$-Artinian $R$-module, then $M$ is a finitely cogenerated $R$-module.
\end{itemize}
\end{cor}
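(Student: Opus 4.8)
The plan is to deduce both parts by specializing the implication chains already established in Theorems \ref{6.1} and \ref{69.1}, and then invoking the equivalences recorded in Corollary \ref{2t22.1}. Neither part requires a fresh construction; the only real content is recognizing that $M$ itself (and its trivial quotient) lies in the universally quantified families appearing in those theorems.

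For part (a), I would begin from the hypothesis that $M$ is nil-$M$-Noetherian, which is precisely condition (b) of Theorem \ref{6.1}. Chaining the two implications $(b)\Rightarrow(c)$ and $(c)\Rightarrow(d)$ proved there yields condition (d): every submodule of $M$ is weakly nil-$M$-finitely generated. Specializing the submodule to be $M$ itself, I conclude that $M$ is a weakly nil-$M$-finitely generated module. Corollary \ref{2t22.1}(b) then identifies this property with ordinary finite generation, which gives the claim.

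Part (b) runs along parallel lines. Assuming $M$ is nil-$M$-Artinian, which is condition (b) of Theorem \ref{69.1}, the implications $(b)\Rightarrow(c)$ and $(c)\Rightarrow(d)$ yield condition (d): every homomorphic image of $M$ is weakly nil-$M$-finitely cogenerated. Applying this to the trivial quotient $M/0\cong M$, I obtain that $M$ is weakly nil-$M$-finitely cogenerated, and Corollary \ref{2t22.1}(d) upgrades this to the statement that $M$ is finitely cogenerated.

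There is no serious obstacle here; the single point requiring care is the specialization step, namely that the phrase ``every submodule'' (resp.\ ``every homomorphic image'') indeed includes $M$ (resp.\ $M/0$) as a legitimate instance. Once this is observed, both conclusions follow at once from the cited results, so the entire argument amounts to assembling the prior machinery in the correct order rather than proving anything substantially new.
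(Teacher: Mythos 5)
Your proposal is correct and follows essentially the same route as the paper's own proof: the paper likewise deduces from Theorem \ref{6.1} (resp.\ Theorem \ref{69.1}) that $M$ is weakly nil-$M$-finitely generated (resp.\ weakly nil-$M$-finitely cogenerated) and then applies Corollary \ref{2t22.1} (b) (resp.\ (d)). The only difference is that you spell out the intermediate steps $(b)\Rightarrow(c)\Rightarrow(d)$ and the specialization to $M$ itself (resp.\ $M/0$), which the paper leaves implicit.
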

\begin{proof}
(a) Assume that $M$ is a nil-$M$-Noetherian $R$-module. Then $M$ is a weakly nil-$M$-finitely generated module by Theorem \ref{6.1}.
Now the result follows from Corollary \ref{2t22.1} (b).

(b) Suppose that $M$ is a nil-$M$-Artinian $R$-module. Then $M$ is a weakly nil-$M$-finitely cogenerated module by Theorem \ref{69.1}.
Now the result follows from Corollary \ref{2t22.1} (d)
\end{proof}

\begin{defn}\label{1.1999}
\begin{itemize}
\item [(a)] We say that a submodule $N$ of an $R$-module $M$ is a \textit{nil-$M$-Artinian submodule} if there is $r \in \sqrt{Ann_R(M)}$ such that for each descending chain
$$
N_1\supseteq N_2\supseteq N_3\supseteq ...\supseteq N_k\supseteq \ldots
$$
of submodules of $N$ there is a positive integer $t$ such that $N_t\cap (0:_Mr)\subseteq N_{t+i}$ for all $i \geq 0$.

\item [(b)] We say that a quotient module $M/N$ of an $R$-module $M$ is a \textit{nil-$M$-Artinian quotient module}  if there is $r \in \sqrt{Ann_R(M)}$ such that
for each descending chain
$$
N_1/N\supseteq N_2/N\supseteq N_3/N\supseteq ...\supseteq N_k/N\supseteq \ldots
$$
of submodules of $M/N$ there is a positive integer $t$ such that $(N_t\cap (0:_Mr))/N\subseteq N_{t+i}/N$ for all $i \geq 0$.
\end{itemize}
\end{defn}
A submodule $N$ of an $R$-module $M$ is said to be \textit{copure} in $M$ if $(N:_MI)=N+(0:_MI)$ for each ideal $I$ of $R$ \cite{AF09}.
$M$ is said to be \emph{fully copure} if every submodule of $M$ is copure \cite{AF122}.
\begin{prop}\label{677.9299}
Let $M$ be a nil-$M$-Artinian $R$-module. Then we have the following.
\begin{itemize}
\item [(a)] Any submodule $N$ of $M$ is a nil-$N$-Artinian $R$-module.
\item [(b)] Any quotient module $M/N$ of $M$ is  a nil-$M/N$-Artinian $R$-module, where $N$ is a copure submodule of $M$.
\item [(c)] Any submodule $N$ of $M$ is a nil-$M$-Artinian $R$-module.
\item [(d)] Any quotient module $M/N$ of $M$ is  a nil-$M$-Artinian $R$-module.
\end{itemize}
\end{prop}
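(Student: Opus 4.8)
The plan is to dualize the proof of Proposition~\ref{6.2}, replacing ascending chains and the operation $\,\cdot+rM$ by descending chains and the intersection $\,\cdot\cap(0:_Mr)$. I would fix once and for all an element $r\in\sqrt{Ann_R(M)}$ witnessing that $M$ is nil-$M$-Artinian. Since $N\subseteq M$ forces $Ann_R(M)\subseteq Ann_R(N)$ and $Ann_R(M)\subseteq Ann_R(M/N)$, the same $r$ lies in $\sqrt{Ann_R(N)}$ and in $\sqrt{Ann_R(M/N)}$; this is what lets me reuse it in parts (a) and (b), while in (c) and (d) the relevant definitions already ask only for an element of $\sqrt{Ann_R(M)}$.

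Parts (c) and (d) I expect to be immediate, and I would dispose of them first. For (c), a descending chain of submodules of $N$ is at the same time a descending chain of submodules of $M$, so the nil-$M$-Artinian condition for $M$ is literally the condition required of a nil-$M$-Artinian submodule in Definition~\ref{1.1999}(a). For (d), a descending chain of submodules of $M/N$ lifts to a descending chain $N_1\supseteq N_2\supseteq\cdots$ of submodules of $M$ each containing $N$; applying the property of $M$ gives $t$ with $N_t\cap(0:_Mr)\subseteq N_{t+i}$, and passing to the image in $M/N$ (using $N\subseteq N_{t+i}$) yields $(N_t\cap(0:_Mr))/N\subseteq N_{t+i}/N$, which is Definition~\ref{1.1999}(b).

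For (a) the point I would isolate is that $(0:_Nr)=N\cap(0:_Mr)$ holds with no further hypothesis --- this is exactly where the Artinian case is lighter than the Noetherian one, which needed purity. Given a descending chain $K_1\supseteq K_2\supseteq\cdots$ of submodules of $N$, I would read it in $M$, obtain $s$ with $K_s\cap(0:_Mr)\subseteq K_{s+i}$, and then use $K_s\subseteq N$ to compute $K_s\cap(0:_Nr)=K_s\cap N\cap(0:_Mr)=K_s\cap(0:_Mr)\subseteq K_{s+i}$, so that $N$ is nil-$N$-Artinian.

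The main obstacle is part (b), where the copure hypothesis plays the role purity played before, and the crux is to identify the submodule $(0:_{M/N}r)$ inside $M/N$. One always has $(0:_{M/N}r)=(N:_Mr)/N$, and copureness applied to the ideal $I=Rr$ gives $(N:_Mr)=(N:_MRr)=N+(0:_MRr)=N+(0:_Mr)$, so $(0:_{M/N}r)=\bigl(N+(0:_Mr)\bigr)/N$. After lifting a chain of submodules of $M/N$ to a chain $H_1\supseteq H_2\supseteq\cdots$ in $M$ with each $H_j\supseteq N$ and extracting $t$ with $H_t\cap(0:_Mr)\subseteq H_{t+i}$, I would finish with the modular law (valid because $N\subseteq H_t$):
\begin{align*}
(H_t/N)\cap(0:_{M/N}r)&=\bigl(H_t\cap(N+(0:_Mr))\bigr)/N\\
&=\bigl(N+(H_t\cap(0:_Mr))\bigr)/N\subseteq H_{t+i}/N.
\end{align*}
Without copureness one recovers only the inclusion $\bigl(N+(0:_Mr)\bigr)/N\subseteq(0:_{M/N}r)$, so $(0:_{M/N}r)$ may be strictly larger and the left-hand intersection need not sit inside $H_{t+i}/N$; securing the reverse inclusion via copureness is the step I expect to demand the most care.
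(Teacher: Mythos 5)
Your proposal is correct and follows essentially the same route as the paper's proof: parts (c) and (d) by reading chains in $M$, part (a) via the identity $(0:_Nr)=N\cap(0:_Mr)$, and part (b) via copureness giving $(N:_Mr)=N+(0:_Mr)$ combined with the modular law. Your write-up is in fact slightly more explicit than the paper's (naming the modular law and the identification $(0:_{M/N}r)=(N:_Mr)/N$), but the mathematical content is identical.
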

\begin{proof}
(a) Let $N$ be a submodule of $M$ and
$$
K_1\supseteq K_2\supseteq K_3\supseteq \ldots\supseteq K_t\supseteq \ldots
$$
be a descending chain of submodules of $N$. Then since this chain is also a descending chain of submodules of $M$,
there is a positive integer $s$ such that $K_s \cap (0:_Mr)\subseteq K_{s+i}$ for all $i \geq 0$, where $r \in \sqrt{Ann_R(M)}\subseteq \sqrt{Ann_R(N)}$. This implies that  $K_s \cap (0:_Nr)=N\cap K_s \cap (0:_Mr)\subseteq N\cap K_{s+i}=K_{s+i}$ for $r \in \sqrt{Ann_R(N)}$.

(b) Let $N$ be a copure submodule of $M$ and
$$
H_1/N\supseteq H_2/N\supseteq H_3/N\supseteq \ldots\supseteq H_t/N\supseteq \ldots
$$
be a descending chain of submodules of $M/N$. Then we have the following descending chain of submodules of $M$.
$$
H_1\supseteq H_2\supseteq H_3\supseteq \ldots\supseteq H_t\supseteq \ldots.
$$
Thus there is a positive integer $t$ such that $H_t\cap (0:_Mr)\subseteq H_{t+i}$ for all $i \geq 0$, where $r \in \sqrt{Ann_R(M)}\subseteq \sqrt{Ann_R(M/N)}$. Since $N$ is copure, $(N:_Mr)= N+(0:_Mr)$. Thus we have
$$
H_t\cap (N:_Mr)=H_t\cap (N+(0:_Mr))=(H_t\cap (0:_Mr))+(N\cap H_t)
$$
$$
= (H_t\cap (0:_Mr))+N\subseteq H_{t+i}+N=H_{t+i}.
$$
This implies that for all $i \geq 0$,
$H_t/N\cap (0:_{M/N}r)\subseteq H_{t+i}/N$, where $r \in  \sqrt{Ann_R(M/N)}$. Hence, $M/N$ is a nil-$M/N$-Artinian $R$-module.

(c) This follows from the fact that any descending chain of submodules of $N$ is also a descending chain of submodules of $M$.

(d) This follows from the fact that any descending chain of submodules of $M/N$ corresponds to a descending chain of submodules of $M$ containing $N$, So $M/N$ is a nil-$M$-Artinian module.
\end{proof}

\begin{thm}\label{677.9288}
Let $M$ be an $R$-module. Then we have the following.
\begin{itemize}
\item [(a)] If $M$ is a fully pure nil-$M$-Noetherian $R$-module, then $M$ is a Noetherian $R$-module.
\item [(b)] If $M$ is a fully copure nil-$M$-Artinian $R$-module, then $M$ is an Artinian $R$-module.
\end{itemize}
\end{thm}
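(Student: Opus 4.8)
The plan is to handle both parts by the same strategy: take an arbitrary chain of submodules, apply the nil-$M$-Noetherian (resp.\ nil-$M$-Artinian) hypothesis to obtain the approximate stabilization $N_{t+i}\subseteq N_t+rM$ (resp.\ $N_t\cap(0:_Mr)\subseteq N_{t+i}$) for some $r\in\sqrt{Ann_R(M)}$, and then use full purity (resp.\ copureness) together with the nilpotency of $r$ on $M$ to upgrade this to genuine stabilization $N_{t+i}=N_t$. The point is that the residual term involving $r$ is exactly what full purity/copureness lets me absorb, after which the relation $r^kM=0$ collapses everything.

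For part (a), given an ascending chain $N_1\subseteq N_2\subseteq\cdots$, fix $i$ and set $P=N_t$ and $N'=N_{t+i}$, so $P\subseteq N'$ and $N'\subseteq P+rM$. For $m\in N'$ I would write $m=p+rx$ with $p\in P$, $x\in M$; then $rx=m-p\in N'$, so $rx\in rM\cap N'=rN'$ by purity of $N'$ (which holds since $M$ is fully pure). Hence $m\in P+rN'$, giving the exact identity $N'=P+rN'$. Since $\sqrt{Ann_R(M)}\subseteq\sqrt{Ann_R(N')}$ we have $r\in\sqrt{Ann_R(N')}$, so Proposition \ref{1.71}(a) applied to the module $N'$ with submodule $P$ and element $r$ yields $P=N'$, i.e.\ $N_t=N_{t+i}$. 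As $i$ is arbitrary the chain stabilizes and $M$ is Noetherian.

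For part (b), given a descending chain $N_1\supseteq N_2\supseteq\cdots$, fix $i$ and set $N=N_t$ and $P=N_{t+i}\subseteq N$, so $N\cap(0:_Mr)\subseteq P$. Copureness of $P$ with $I=Rr$ gives $(P:_Mr)=P+(0:_Mr)$. Intersecting with $N$ and invoking the modular law (valid since $P\subseteq N$),
\[
(P:_Nr)=N\cap(P:_Mr)=N\cap\bigl(P+(0:_Mr)\bigr)=P+\bigl(N\cap(0:_Mr)\bigr)=P,
\]
using $N\cap(0:_Mr)=N_t\cap(0:_Mr)\subseteq N_{t+i}=P$. Thus $(P:_Nr)=P$, and an easy induction gives $(P:_Nr^k)=P$ for all $k$. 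Choosing $k$ with $r^kM=0$ forces $r^kN=0$, so $(P:_Nr^k)=N$, whence $N=P$, i.e.\ $N_t=N_{t+i}$; the chain stabilizes and $M$ is Artinian.

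The hard part is not the finishing arguments but recognizing precisely how the structural hypotheses enter: full purity is exactly what turns $N_{t+i}\subseteq N_t+rM$ into $N_{t+i}=N_t+rN_{t+i}$, and copureness (via the modular law) is exactly what turns $N_t\cap(0:_Mr)\subseteq N_{t+i}$ into $(N_{t+i}:_{N_t}r)=N_{t+i}$. I also expect the bookkeeping of applying Proposition \ref{1.71} to the correct ambient module $N'$ in (a), and verifying $r\in\sqrt{Ann_R(N')}$ there, to be the steps most in need of care.
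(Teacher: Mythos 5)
Your proof is correct, and it takes a genuinely different route from the paper's. The paper argues structurally: for (a) it cites Proposition \ref{6.2}(a) (every pure submodule $N$ of a nil-$M$-Noetherian module is a nil-$N$-Noetherian module) followed by Corollary \ref{677.92}(a) (a nil-$N$-Noetherian module is finitely generated), and then invokes the classical characterization of Noetherian modules as those whose submodules are all finitely generated; part (b) is dual, via Proposition \ref{677.9299}(b) and Corollary \ref{677.92}(b) together with the characterization of Artinian modules through finitely cogenerated quotients. You instead work directly with an arbitrary chain: the nil-hypothesis gives approximate stabilization, and you upgrade it to exact stabilization --- purity turns $N_{t+i}\subseteq N_t+rM$ into $N_{t+i}=N_t+rN_{t+i}$, which collapses to $N_{t+i}=N_t$ by Proposition \ref{1.71}(a) applied in the ambient module $N_{t+i}$ (your check that $r\in\sqrt{Ann_R(N_{t+i})}$ is the needed bookkeeping), while copureness plus the modular law turns $N_t\cap(0:_Mr)\subseteq N_{t+i}$ into $(N_{t+i}:_{N_t}r)=N_{t+i}$, which collapses by nilpotency of $r$ on $M$. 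The two approaches exploit purity/copureness through essentially the same computations --- yours are the ones hidden inside the proofs of Propositions \ref{6.2}(a) and \ref{677.9299}(b) --- but your route bypasses the finite (co)generation machinery entirely (Theorems \ref{6.1} and \ref{69.1}, Corollary \ref{2t22.1}, and Corollary \ref{677.92} are never needed), at the cost of redoing the chain manipulations by hand. The paper's proof is shorter given its earlier results and presents the theorem as a corollary of reusable structure theory; yours is self-contained, more elementary, and shows directly that the very chain in question stabilizes exactly from index $t$ on.
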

\begin{proof}
(a) Assume that $M$ is a fully pure nil-$M$-Noetherian $R$-module. Then by Proposition \ref{6.2} (a), every submodule $N$ of $M$ is a
nil-$N$-Noetherian $R$-module. Now, by Corollary \ref{677.92} (a), every submodule $N$ of $M$ is a finitely generated. Thus $M$ is a Noetherian $R$-module.

(b) Assume that $M$ is a fully copure nil-$M$-Artinian $R$-module. Then by Proposition \ref{677.9299} (b), for every submodule $N$ of $M$ we have $M/N$ is a
nil-$M/N$-Artinian $R$-module. Now, by Corollary \ref{677.92} (b), $M/N$ is a finitely generated. Hence, $M$ is an Artinian $R$-module.
\end{proof}

Recall that an $R$-module $M$ is said to be a
\emph{multiplication module} if for every submodule $N$ of $M$
there exists an ideal $I$ of $R$ such that $N=IM$ \cite{Ba81}. An $R$-module $M$ is said to be a \emph{comultiplication module} if for every submodule $N$ of $M$ there exists an ideal $I$ of $R$ such that $N=(0:_MI)$ \cite{AF07}.
It is easy to see that $M$ is a multiplication (resp. comultiplication) module if and only if $N=(N:_RM)M$ (resp. $N=(0:_MAnn_R(N))$) for each submodule $N$ of $M$.

\begin{prop}\label{699.4}
Let $M$ be an $R$-module. Then we have the following.
\begin{itemize}
\item [(a)] If $R$ is a nil-$R$-Noetherian ring and $M$ is a multiplication $R$-module, then $M$ is a nil-$M$-Noetherian module.
\item [(b)] If $R$ is a nil-$R$-Artinian ring and $M$ is a faithful comultiplication $R$-module, then $M$ is a nil-$M$-Noetherian module.
\end{itemize}
\end{prop}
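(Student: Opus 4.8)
Both parts are dual to one another, and in each case my plan is to translate a chain of submodules of $M$ into a chain of ideals of $R$, apply the hypothesis on $R$, and then translate back using the (co)multiplication structure.

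For (a), since $Ann_R(R)=0$, the element $r$ witnessing nil-$R$-Noetherianity lies in $\sqrt{Ann_R(R)}=Nil(R)\subseteq\sqrt{Ann_R(M)}$, so this is the element I would use for $M$. Given an ascending chain $N_1\subseteq N_2\subseteq\cdots$ of submodules, I would pass to the ascending chain of ideals $I_k=(N_k:_R M)$. The nil-$R$-Noetherian property yields $t$ with $I_{t+i}\subseteq I_t+(r)$ for all $i\ge0$; multiplying by $M$ and using that $M$ is a multiplication module (so $N_k=I_kM$) together with $(I_t+(r))M=I_tM+rM$, I obtain $N_{t+i}\subseteq N_t+rM$. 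This part should be routine.

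For (b) I would exploit that a faithful comultiplication module carries the order-reversing pair $N\mapsto Ann_R(N)$ and $I\mapsto(0:_M I)$, where comultiplication gives $N=(0:_M Ann_R(N))$ and faithfulness gives $\sqrt{Ann_R(M)}=Nil(R)=\sqrt{Ann_R(R)}$, so I may reuse the $r$ from the nil-$R$-Artinian hypothesis. Given an ascending chain $N_1\subseteq N_2\subseteq\cdots$ of submodules, the annihilators $I_k=Ann_R(N_k)$ form a \emph{descending} chain of ideals; applying nil-$R$-Artinianity to $\{I_k\}$ yields $t$ with $I_t\cap(0:_R r)\subseteq I_{t+i}$ for all $i\ge0$. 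Applying the order-reversing map $(0:_M-)$ then gives
\[
N_{t+i}=(0:_M I_{t+i})\subseteq\bigl(0:_M(I_t\cap(0:_R r))\bigr),
\]
and the task reduces to identifying the right-hand side with $N_t+rM$. It is precisely the order-reversal built into comultiplication that makes a nil-$R$-Artinian ring produce a nil-$M$-\emph{Noetherian} module.

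The heart of the matter, and where I expect the real work to lie, is the dual-distributive identity $(0:_M(I_t\cap(0:_R r)))=(0:_M I_t)+(0:_M(0:_R r))=N_t+rM$. I would prove the first equality by exploiting that both ideals are $M$-saturated, i.e. satisfy $Ann_R((0:_M I))=I$: for $I_t=Ann_R(N_t)$ this is immediate from comultiplication, and for $(0:_R r)$ I would first use faithfulness to compute $Ann_R(rM)=(0:_R r)$, which gives $(0:_M(0:_R r))=rM$ and shows $(0:_R r)$ is saturated. Then, writing $P=(0:_M I_t)+(0:_M(0:_R r))=N_t+rM$, comultiplication gives $P=(0:_M Ann_R(P))$, while $Ann_R(P)=Ann_R((0:_M I_t))\cap Ann_R((0:_M(0:_R r)))=I_t\cap(0:_R r)$ by saturation; hence $P=(0:_M(I_t\cap(0:_R r)))$ and therefore $N_{t+i}\subseteq N_t+rM$, as required. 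The point to be careful about is that this identity genuinely uses saturation: a direct attempt to split an element of $(0:_M(I_t\cap(0:_R r)))$ by hand only succeeds when $I_t+(0:_R r)=R$, so the clean route is the annihilator computation above rather than element-chasing.
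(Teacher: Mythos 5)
Your proof is correct, and both parts follow the same route as the paper's: translate the chain of submodules into a chain of ideals via $(N:_R M)$ in (a) and $Ann_R(-)$ in (b), apply the nil-Noetherian/nil-Artinian hypothesis on $R$ to that chain, and translate back using the multiplication (resp.\ comultiplication) structure, with the order reversal in (b) explaining why an Artinian-type hypothesis on $R$ yields a Noetherian-type conclusion on $M$. The one point of divergence is the identity $\bigl(0:_M Ann_R(N_t)\cap(0:_R r)\bigr)=N_t+rM$, which you correctly identify as the heart of (b): the paper obtains it by noting $(0:_R r)=Ann_R(rM)$ (faithfulness) and then citing \cite[Proposition 3.3]{AF08}, whereas you prove it inline by computing $Ann_R(N_t+rM)=Ann_R(N_t)\cap Ann_R(rM)=Ann_R(N_t)\cap(0:_R r)$ and applying comultiplication to get $N_t+rM=\bigl(0:_M Ann_R(N_t+rM)\bigr)$ --- which is precisely the content of the cited result. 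So your argument is self-contained where the paper's relies on an external reference; otherwise the two proofs coincide.
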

\begin{proof}
(a) Let
$$
K_1\subseteq K_2\subseteq K_3\subseteq \ldots\subseteq K_t\subseteq \ldots
$$
be an ascending chain of submodules of $M$.
Then we have
$$
(K_1:_RM)\subseteq (K_2:_RM)\subseteq (K_3:_RM)\subseteq \ldots\subseteq (K_t:_RM)\subseteq \ldots
$$
is an ascending chain of ideals of $R$. Thus by assumption, there exist $r \in \sqrt{0}\subseteq \sqrt{Ann_R(M)}$ and a positive integer $t$ such that $(N_{t+i}:_RM)\subseteq (N_t:_RM)+rR$ for all $i \geq 0$. This implies that
$$
N_{t+i}=(N_{t+i}:_RM)M\subseteq (N_t:_RM)M+rM=N_t+rM
$$
for all $i \geq 0$, as needed.

(b) Let
$$
K_1\subseteq K_2\subseteq K_3\subseteq \ldots\subseteq K_t\subseteq \ldots
$$
be an ascending chain of submodules of $M$.
Then we have
$$
Ann_R(K_1)\supseteq Ann_R(K_2)\supseteq Ann_R(K_3)\supseteq \ldots\supseteq Ann_R(K_t)\supseteq \ldots
$$
 is a descending chain of ideals of $R$. Thus by assumption, there exist $r \in \sqrt{0}\subseteq \sqrt{Ann_R(M)}$ and a positive integer $t$ such that $Ann_R(N_t)\cap (0:_Rr)\subseteq Ann_R(N_{t+i})$ for all $i \geq 0$.
As $M$ is faithful, $Ann_R(r)=Ann_R(rM)$. Thus by using \cite[Proposition 3.3]{AF08}, we have
$$
N_{t+i}=(0:_MAnn_R(N_{t+i})) \subseteq (0:_MAnn_R(N_t)\cap (0:_Rr))=
$$
$$
(0:_MAnn_R(N_t)\cap Ann_R(rM))=N_t+rM.
$$
for all $i \geq 0$, as needed.
\end{proof}

\begin{defn}\label{699.18}
Let $M$ be an $R$-module and $f$ be an endomorphism of $M$. We say that $f$ is \textit{nil-epic} if there exists  $r \in \sqrt{Ann_R(M)}$ such that $(0:_Mr) \subseteq Im f$. Also, we say that $f$ is \textit{nil-monic} if there exists $r \in \sqrt{Ann_R(M)}$ such that $Ker f \subseteq rM$.
\end{defn}

\begin{thm}\label{699.14}
Let $M$ be an $R$-module and $f$ be an endomorphism of $M$.
\begin{itemize}
\item [(a)] If $M$ is a nil-Artinian module, then there exists $r \in \sqrt{Ann_R(M)}$ such that $(0:_Mr)\subseteq Im f^n+Ker f^n$ for some $n$, whence if $f$ is monic, then $f$ is nil-epic.
\item [(b)] If $M$ is a nil-Noetherian module, then there exists $r \in \sqrt{Ann_R(M)}$ such that $Im f^n\cap Ker f^n\subseteq rM$ for some $n$, whence if $f$ is epic, then $f$ is nil-monic.
\end{itemize}
\end{thm}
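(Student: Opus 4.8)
The plan is to adapt the classical Fitting-type decomposition to the nil-setting, exploiting the chain conditions of Definitions \ref{1.1} and \ref{1.3} together with the fact that the $R$-endomorphism $f$ commutes with multiplication by any element of $R$.

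For part (a), I would consider the descending chain of images
$$
Im f\supseteq Im f^2\supseteq Im f^3\supseteq \ldots.
$$
Since $M$ is nil-$M$-Artinian, there exist $r\in\sqrt{Ann_R(M)}$ and a positive integer $t$ with $Im f^t\cap(0:_Mr)\subseteq Im f^{t+i}$ for all $i\geq 0$; in particular $Im f^t\cap(0:_Mr)\subseteq Im f^{2t}$. The key observation is that $(0:_Mr)$ is $f$-invariant: if $rx=0$, then $rf^t(x)=f^t(rx)=0$, so $f^t$ carries $(0:_Mr)$ into itself. Hence for $x\in(0:_Mr)$ we have $f^t(x)\in Im f^t\cap(0:_Mr)\subseteq Im f^{2t}$, say $f^t(x)=f^{2t}(y)$; then $x-f^t(y)\in Ker f^t$ and $x=f^t(y)+(x-f^t(y))\in Im f^t+Ker f^t$. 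Taking $n=t$ gives $(0:_Mr)\subseteq Im f^n+Ker f^n$. If moreover $f$ is monic, then $f^n$ is monic, so $Ker f^n=0$ and $(0:_Mr)\subseteq Im f^n\subseteq Im f$, which is precisely the condition for $f$ to be nil-epic.

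For part (b), I would dualize using the ascending chain of kernels
$$
Ker f\subseteq Ker f^2\subseteq Ker f^3\subseteq \ldots.
$$
Nil-$M$-Noetherianity supplies $r\in\sqrt{Ann_R(M)}$ and a positive integer $t$ with $Ker f^{t+i}\subseteq Ker f^t+rM$ for all $i\geq 0$, so in particular $Ker f^{2t}\subseteq Ker f^t+rM$. Given $x\in Im f^t\cap Ker f^t$, write $x=f^t(y)$; from $f^t(x)=0$ we obtain $f^{2t}(y)=0$, whence $y\in Ker f^{2t}\subseteq Ker f^t+rM$, say $y=z+rm$ with $z\in Ker f^t$ and $m\in M$. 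Then $x=f^t(y)=f^t(z)+rf^t(m)=rf^t(m)\in rM$, so with $n=t$ we get $Im f^n\cap Ker f^n\subseteq rM$. If $f$ is epic, then $f^n$ is epic, $Im f^n=M$, and $Ker f^n=Im f^n\cap Ker f^n\subseteq rM$; since $Ker f\subseteq Ker f^n$, this yields $Ker f\subseteq rM$, i.e., $f$ is nil-monic.

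The arguments become routine once the correct chains are selected, so I expect the only point needing care to be the interplay between the element $r$ furnished by the chain condition and the endomorphism $f$: namely, that $f$ commutes with multiplication by $r$ (used both to keep $(0:_Mr)$ invariant in (a) and to push the term $rm$ through $f^t$ in (b)), and that a single $r$ serves throughout each part. Beyond this bookkeeping, no genuinely hard step arises.
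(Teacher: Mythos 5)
Your proof is correct and follows essentially the same argument as the paper: the same Fitting-style chains ($Im\, f \supseteq Im\, f^2 \supseteq \cdots$ and $Ker\, f \subseteq Ker\, f^2 \subseteq \cdots$), the same specialization of the nil-chain conditions to compare the $t$-th and $2t$-th terms, the same $f$-invariance of $(0:_Mr)$ in (a), and the same decomposition $x=f^t(y)+(x-f^t(y))$. Your write-up is if anything slightly more explicit than the paper's (e.g., writing $y=z+rm$ in (b)), but there is no substantive difference.
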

\begin{proof}
(a) Clearly, we have
$$
Im f \supseteq Im f^2\supseteq \cdots.
$$
Thus by assumption, there exists $r \in \sqrt{Ann_R(M)}$ such that $Im f^n \cap (0:_Mr) \subseteq Im f^{2n}$ for some positive integer $n$. Now, let $x \in  (0:_Mr)$. Then $rx=0$ and so $rf(x)=0$. Hence,  $f(x) \in  (0:_Mr)$. This implies that $f^n(x) \in  (0:_Mr)$. Therefore, $f^n(x) \in  Im f^n \cap (0:_Mr) \subseteq Im f^{2n}$. Thus $f^n(x)=f^{2n}(y)$. Clearly,
$$
x=f^n(y)+(x-f^n(y))\in Im f^n+Ker f^n.
$$
Finally, if $f$ is monic, then $Ker f^n=0$. This implies that $(0:_Mr)\subseteq Im f^n\subseteq Im f$.

(b) Since
$$
Ker f \subseteq Ker f^2\subseteq \cdots,
$$
by assumption, there exists $r \in \sqrt{Ann_R(M)}$ such that $Ker f^{2n}\subseteq Ker f^n+rM$ for some positive integer $n$. Now, let $x \in Im f^n\cap Ker f^n$. Then $x=f^n(y)$ and $f^n(x)=0$. Thus $f^{2n}(y)=0$. Therefore, $y \in Ker f^{2n}\subseteq  Ker f^n+rM$. Thus
$$
x=f^n(y) \in  f^n(Ker f^n)+f^n(rM)=rf^n(M)\subseteq rM.
$$
Finally, if $f$ is epic, then $Im f^n=M$. It follows that $Ker f\subseteq Ker f^n\subseteq rM$, as needed.
\end{proof}

\textbf{There is no conflict of interest}

\end{document}